\newcommand{\p}{p}
\newcommand{\e}{e}
\renewcommand{\d}{\,\mathrm{d}}
\newcommand{\dd}{\mathrm{d}}
\newcommand{\E}{\mathbb{E}}
\renewcommand{\P}{\mathbb{P}}
\newcommand{\R}{\mathbb{R}}
\newcommand{\EEE}{\mathcal{E}}
\DeclareMathOperator{\sign}{sign}
\DeclareMathOperator{\Sum}{sum}
\DeclareMathOperator{\KL}{KL}
\newcommand{\from}{\mathbin{\|}}
\theoremstyle{plain}
\newtheorem{theorem}{Theorem}[section]
\newtheorem{lemma}[theorem]{Lemma}
\newtheorem{proposition}[theorem]{Proposition}
\theoremstyle{definition}
\theoremstyle{remark}
\newtheorem{remark}[theorem]{Remark}
\newcommand{\abstr}
  {The notion of an \e-value has been recently proposed
  as a possible alternative to critical regions and \p-values
  in statistical hypothesis testing.
  In this paper we consider testing the nonparametric hypothesis of symmetry,
  introduce analogues for \e-values of three popular nonparametric tests,
  define an analogue for \e-values of Pitman's asymptotic relative efficiency,
  and apply it to the three nonparametric tests.
  We discuss limitations of our simple definition of asymptotic relative efficiency
  and list directions of further research.}
\begin{document}
\title{Nonparametric \e-tests of symmetry}
\author{Vladimir Vovk\thanks%
  {Department of Computer Science,
  Royal Holloway, University of London,
  Egham, Surrey, UK.
  E-mail: \href{mailto:v.vovk@rhul.ac.uk}{v.vovk@rhul.ac.uk}.}
\and Ruodu Wang\thanks%
  {Department of Statistics and Actuarial Science,
  University of Waterloo,
  Waterloo, Ontario, Canada.
  E-mail: \href{mailto:wang@uwaterloo.ca}{wang@uwaterloo.ca}.}}

\maketitle
\begin{abstract}
  \smallskip
  \abstr
\end{abstract}

\section{Introduction}

The study of the efficiency of nonparametric tests
that started in the late 1940s is often regarded as a success story in statistics.
Some nonparametric tests, such as Wilcoxon's signed-rank and rank-sum tests,
are highly efficient even when used in the framework of popular parametric models,
such as the Gaussian model.
Theoretical results mostly concern asymptotic efficiency of those tests,
but there is also empirical evidence for their finite-sample efficiency.
While some nonparametric tests (such as Wilcoxon's)
became very popular after their high efficiency had been discovered,
others (such as Wald and Wolfowitz's run test)
were gradually discarded from the statistical literature
after their low efficiency had been demonstrated
\cite[Introduction]{Nikitin:1995}.

The usual approach to hypothesis testing is based on critical regions or \p-values,
but in this paper we replace them with their alternative, \e-values
(see, e.g., \cite{Vovk/Wang:2021,Shafer:2021,Grunwald/etal:arXiv1906}).
We show that some of the old results about the efficiency of nonparametric tests
carry over to hypothesis testing based on \e-values.
To distinguish our notions of power, tests, etc.,
from the standard notions, we add the prefix ``\e-''.
(The prefix ``\p-'' is sometimes added to signify standard notions based on \p-values,
but in this paper we rarely need it since the key notion that we are interested in,
Pitman's asymptotic relative efficiency,
is defined in terms of critical regions rather than \p-values.)

We explain basics of \e-testing in Sect.~\ref{sec:e-testing},
and in particular, we state an analogue of the Neyman--Pearson lemma in \e-testing.
In the following section, Sect.~\ref{sec:parametric},
we give a simple example of a parametric \e-test,
one for testing the null hypothesis $N(0,1)$ against an alternative $N(\theta,1)$
in an IID situation.

In Sect.~\ref{sec:Fisher} we give the first,
and in some sense most powerful,
of the three examples of nonparametric \e-tests that we discuss in this paper.
It was introduced by Fisher in his 1935 book \cite{Fisher:1935local}.
Our nonparametric null hypothesis is that of symmetry around 0
(and for simplicity we consider independent observations
coming from a continuous distribution).

The material of Sects.~\ref{sec:e-testing}--\ref{sec:Fisher} is standard.
After that (Sect.~\ref{sec:power})
we define the asymptotic relative efficiency of \e-tests
in the spirit of Pitman's definition \cite{Pitman:1948}.
We regard our definition of asymptotic relative efficiency
as a direct translation of the classical definition.
Then in Sect.~\ref{sec:ARE}
we compute the Pitman-type asymptotic relative efficiency
of the Fisher-type test discussed in Sect.~\ref{sec:Fisher}.
This is complemented by similar computations
for \e-versions of the sign test in Sect.~\ref{sec:sign}
and Wilcoxon's signed-rank test in Sect.~\ref{sec:Wilcoxon}.
Our results for all three tests agree perfectly with the classical results.
This is just a first step, and in Sect.~\ref{sec:conclusion}
we discuss limitations of our approach (which are considerable)
and list natural directions of further research.

\section{General principles of \e-testing}
\label{sec:e-testing}

Let $P$ be a given probability measure on a sample space $\Omega$
(a measurable space).
Our \emph{null hypothesis} is $\{P\}$;
it is simple in the sense of containing a single probability measure.
(We will sometimes also refer to $P$ as our null hypothesis.)

We observe $\omega\in\Omega$ and are interested in whether $\omega$ was generated from $P$.
An \emph{\e-variable} for testing $P$ is an $[0,\infty]$-valued random variable $E$
such that $\int E\d P\le1$.
In order to be used for testing, we need to choose $E$ before we observe $\omega$.
By Markov's inequality, $E$ can be large only with a small probability
(for any threshold $c>1$, $P(E\ge c)\le1/c$);
therefore, observing a large $E$ casts doubt on $\omega$ being generated from $P$.

In the classical Neyman--Pearson approach to hypothesis testing,
in addition to $P$ we also have an alternative hypothesis $Q$.
The \emph{\e-power} of an \e-variable $E$ is then defined
as $\int \log E \d Q$.
This is an analogue of the usual notion of power,
but it only works in regular cases.
One of such regular cases will be discussed in the next section.
The following lemma is very well known
(see, e.g., \cite[Sect.~2.2.1]{Shafer:2021} and the references therein), and we provide a simple proof.

\begin{lemma}\label{lem:power}
  For given null and alternative hypotheses $P$ and $Q$, respectively,
  such that $Q\ll P$,
  the largest \e-power is attained by the likelihood ratio $\dd Q/\dd P$:
  for any \e-variable~$E$,
  \begin{equation}\label{eq:to-prove}
    \int \log E\d Q
    \le
    \int \log\frac{\dd Q}{\dd P}\d Q.
  \end{equation}
  And if $Q\ll P$ is violated,
  the largest \e-power is $\infty$.
\end{lemma}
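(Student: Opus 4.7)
The plan is to prove \eqref{eq:to-prove} by a one-line Jensen/Gibbs argument relative to the likelihood ratio, and then to settle the non-absolutely-continuous case by an explicit construction.

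Set $L := \dd Q/\dd P$, which exists as a Radon--Nikodym derivative since $Q\ll P$. First I would observe that $L$ is itself an \e-variable for $P$: it is $[0,\infty]$-valued and $\int L\d P = Q(\Omega)=1$. So the right-hand side of \eqref{eq:to-prove} is an attainable value of the \e-power, and it suffices to prove that no \e-variable beats it.

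Given an arbitrary \e-variable $E$, I would rewrite the difference of \e-powers as $\int \log(E/L)\d Q$, which is $Q$-a.s.\ well defined in $[-\infty,\infty]$ because $\{L=0\}$ is $Q$-null. By concavity of $\log$ and Jensen's inequality applied to the probability measure $Q$, together with the change of variable $\dd Q = L\d P$,
\begin{equation*}
  \int \log\frac{E}{L}\d Q
  \le \log\int\frac{E}{L}\d Q
  = \log\int E\d P
  \le \log 1 = 0,
\end{equation*}
which is exactly \eqref{eq:to-prove}. If one is worried about $E$ vanishing on a $Q$-positive set, the left-hand side of \eqref{eq:to-prove} is then $-\infty$ and the inequality holds trivially, so this edge case is harmless.

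For the final claim, suppose $Q\ll P$ fails, so there exists a measurable $A\subseteq\Omega$ with $P(A)=0$ and $Q(A)>0$. For any $c>0$ the random variable $E_c := c\,\mathbf{1}_A + \mathbf{1}_{A^c}$ satisfies $\int E_c\d P = P(A^c)\le 1$, hence is an \e-variable, while its \e-power equals $Q(A)\log c + Q(A^c)\cdot 0 \to \infty$ as $c\to\infty$. Thus the supremum of the \e-power is $+\infty$. The main obstacle in this proof is not technical depth but bookkeeping of null sets and the $\pm\infty$ conventions; once one notes that $\{L=0\}$ is $Q$-null and that $\log$ extends continuously to $[0,\infty]$, both parts of the lemma follow from a single application of Jensen's inequality plus one explicit example.
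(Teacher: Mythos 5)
Your proposal is correct and takes essentially the same route as the paper: your Jensen step $\int\log(E/L)\d Q\le\log\int(E/L)\d Q$ is just the integrated form of the paper's pointwise bound $\log x\le x-1$ applied to $E/q$ (both arguments reduce to the normalization $\int E\d P\le1$), and for the singular case the paper uses the single \e-variable equal to $\infty$ on $A$ and $1$ elsewhere, attaining \e-power $\infty$ outright, rather than your family $E_c$ with $c\to\infty$. One pedantic note: since $\d Q=L\d P$, in fact $\int(E/L)\d Q=\int_{\{L>0\}}E\d P\le\int E\d P$, so your middle equality should be an inequality --- but as it points in the right direction, nothing breaks.
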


The likelihood ratio $\dd Q/\dd P$ in Lemma~\ref{lem:power}
is understood to be the Radon--Nikodym derivative of $Q$ w.r.\ to $P$.

\begin{proof}[Proof of Lemma~\ref{lem:power}]
  If $Q\ll P$ is violated,
  there is an event $A\subseteq\Omega$ such that $P(A)=0$ and $Q(A)>0$.
  Then the \e-power of the \e-variable
  \[
    E(\omega)
    :=
    \begin{cases}
      \infty & \text{if $\omega\in A$}\\
      1 & \text{otherwise}
    \end{cases}
  \]
  is $\infty$.

  It remains to consider the case $Q\ll P$.
  In this case, let $q$ be a probability density function of $Q$ w.r.\ to $P$.
  In terms of $q$, we can rewrite \eqref{eq:to-prove} as
  \begin{equation*}
    \int q \log E \d P
    \le
    \int q \log q \d P,
    \quad
    \text{i.e.,}
    \quad
    \int q \log\frac{E}{q} \d P
    \le
    0.
  \end{equation*}
  The last inequality follows from $\log x \le x-1$.
\end{proof}

According to Lemma~\ref{lem:power},
which is an analogue for \e-values of the Neyman--Pearson lemma,
the optimal \e-variable for testing a null hypothesis $P$
against an alternative $Q\ll P$ is the likelihood ratio $\dd Q/\dd P$.
The maximum \e-power is
\[
  \KL(Q\from P)
  :=
  \int
  \log\frac{\dd Q}{\dd P}
  \d Q
\]
(cf.\ \cite[Sect.~2.3]{Shafer:2021} and \cite[Theorem 1]{Grunwald/etal:arXiv1906}).
This is simply the Kullback--Leibler divergence \cite{Kullback/Leibler:1951}
of the alternative $Q$ from the null hypothesis $P$;
we will call it the \emph{optimal \e-power}.

We will sometimes refer to $\log E$ as the \emph{observed \e-power} of $E$;
the \e-power is then the expectation of the observed \e-power
w.r.\ to the alternative hypothesis~$Q$.

The notion of \e-power is very close to Shafer's \cite{Shafer:2021} implied target,
the main difference being that the implied target only depends
on the null hypothesis $P$ and the \e-variable~$E$.

As a short detour,
let us check that our notion of \e-power enjoys a natural property
in testing with multiple \e-values.
Denote by $\Pi^Q$ the function
\begin{equation}\label{eq:Pi}
  \Pi^Q:
  E\mapsto\int \log E \d Q
\end{equation}
that maps an \e-variable to its \e-power.
Independent \e-variables $E_1,\dots,E_K$
can be combined into one \e-variable using a merging function,
the most common choices being convex mixtures of the product functions
\[
  F_M:(e_1,\dots,e_K)\mapsto\prod_{k\in M} e_k,
\]
where $M$ is a subset of $\{1,\dots,K\}$,
with $F_{\varnothing}$ set to $1$.
Denote by $\mathcal{M}$ the convex hull of all functions $F_M$.
Useful elements of the class $\mathcal{M}$ are U-statistics,
symmetric merging functions discussed in \cite[Sect.~4]{Vovk/Wang:2021}.

\begin{proposition}\label{prop:simple}
  Let $\mathbf{E}=(E_1,\dots,E_K)$ be a vector of independent \e-variables.
  \begin{enumerate}
  \item[(i)]
    For all $F\in\mathcal{M}$,
    $F(\mathbf{E})$ is an \e-variable.
  \item[(ii)]
    If $\Pi^Q(E_k)>0$ for each $k=1,\dots,K$,
    then $\Pi^Q(F(\mathbf{E}))>0$ for all $F\in\mathcal{M}\setminus\{F_{\varnothing}\}$.
  \item[(iii)]
    If $\Pi^Q(E_k)\ge0$ for each $k=1,\dots,K$,
    then $\Pi^Q(F(\mathbf{E}))\ge0$ for all $F\in\mathcal{M}$.
  \end{enumerate}
\end{proposition}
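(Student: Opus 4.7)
The plan is to verify all three claims first on the generators $F_M$, where everything reduces to one-line computations, and then lift them to the full convex hull $\mathcal{M}$: by linearity of the integral for (i) and by Jensen's inequality for (ii) and (iii).

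For part (i), the independence of $E_1,\dots,E_K$ under $P$ together with the e-variable property gives
\[
  \int F_M(\mathbf{E})\d P = \prod_{k\in M}\int E_k\d P \le 1,
\]
with the empty product handling $F_\varnothing=1$. Writing a general $F\in\mathcal{M}$ as $\sum_i\lambda_iF_{M_i}$ with $\lambda_i\ge 0$ and $\sum_i\lambda_i=1$, linearity of expectation yields $\int F(\mathbf{E})\d P\le\sum_i\lambda_i=1$.

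For parts (ii) and (iii), the crucial observation on generators is the additivity identity
\[
  \Pi^Q(F_M(\mathbf{E})) = \int\sum_{k\in M}\log E_k\d Q = \sum_{k\in M}\Pi^Q(E_k),
\]
which is nonnegative (resp.\ strictly positive when $M\ne\varnothing$) under the respective hypotheses on the $\Pi^Q(E_k)$. To pass from generators to a general element of $\mathcal{M}$, I apply concavity of $\log$ pointwise and integrate against $Q$:
\[
  \Pi^Q\!\Bigl(\sum_i\lambda_iF_{M_i}(\mathbf{E})\Bigr) \ge \sum_i\lambda_i\Pi^Q(F_{M_i}(\mathbf{E})).
\]
Part (iii) is immediate. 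For part (ii) I need at least one index $i$ with $\lambda_i>0$ and $M_i\ne\varnothing$ so that the lower bound is strictly positive.

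The only delicate point, and the step I expect to be the main obstacle, is justifying that such an index always exists when $F\ne F_\varnothing$: since the monomials $\{F_M\}_{M\subseteq\{1,\dots,K\}}$ are linearly independent as functions on $[0,\infty)^K$, every $F\in\mathcal{M}$ has a unique convex representation, and $F\ne F_\varnothing$ forces some $M_i\ne\varnothing$ to carry positive weight. Equivalently, one can argue directly: fixing any representation, if every positively weighted $M_i$ were empty then $F$ would reduce to $F_\varnothing$, a contradiction. Once this is noted, the whole proposition amounts to a three-line computation plus a single invocation of Jensen's inequality.
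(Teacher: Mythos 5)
Your proof is correct and follows essentially the same route as the paper's: verify additivity of $\Pi^Q$ on the product generators $F_M$ (using independence under the null for part (i)), then pass to convex mixtures via concavity of $\Pi^Q$, i.e., pointwise concavity of $\log$ integrated against $Q$. Your explicit justification that some nonempty $M_i$ must carry positive weight when $F\ne F_{\varnothing}$ merely spells out what the paper compresses into ``the inequality is strict unless $F=F_{\varnothing}$'', so it is a welcome clarification rather than a different argument.
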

\begin{proof}
  Part~(i) follows from the fact that the product of independent \e-variables is an \e-variable,
  and a convex mixture of \e-variables is an \e-variable.
  Next we prove~(ii). 
  For all $M$ other than $M=\varnothing$,
  we have
  \[
    \Pi^Q(F_M(\mathbf{E}))
    =
    \sum_{k\in M}
    \Pi^Q(E_k)
    >
    0,
  \]
  and $\Pi^Q(F_{\varnothing}(\mathbf{E}))=0$.
  Note that the mapping \eqref{eq:Pi}
  is concave on the set of nonnegative random variables.
  Since $F(\mathbf{E})$ is a convex mixture of $F_M(\mathbf{E})$ for $M\subseteq\{1,\dots,K\}$,
  we get $\Pi^Q(F(\mathbf{E}))\ge 0$,
  and the inequality is strict unless $F=F_{\varnothing}$.
  This proves~(ii).
  The case~(iii) is similar to~(ii). 
\end{proof}

Proposition \ref{prop:simple} shows that \e-power remains positive
when combining independent \e-values with positive \e-power
using a large class of merging functions.
As a special case of Proposition~\ref{prop:simple} applied to only one \e-variable,
if $\Pi^Q(E)>0$,
then $\Pi^Q(1-\lambda +\lambda E)>0$ for all $\lambda\in(0,1]$.
The operation of changing $E$ to $1-\lambda+\lambda E$ is common in building e-processes;
see, e.g., \cite{Waudby-Smith/Ramdas:2022}.

\section{A parametric \e-test}
\label{sec:parametric}

We start our discussion of specific \e-tests from a very simple parametric case,
that of the Gaussian statistical model $Q_\theta:=N(\theta,1)$, $\theta\in\R$,
with the variance known to be 1.
We observe realizations of independent $Z_1,\dots,Z_n\sim N(\theta,1)$.
The null hypothesis $P$ is $N(0,1)$,
and we are interested in the alternatives
$Q=Q_{\theta}=N(\theta,1)$ for $\theta\ne0$.

For observations $z_1,\dots,z_n$ and a given alternative $N(\theta,1)$,
the likelihood ratio of the alternative to the null hypothesis is
\begin{equation}\label{eq:LR}
  E_{\theta}(z_1,\dots,z_n)
  :=
  \frac
  {
    \exp
    \left(
      -\frac12
      \sum_{i=1}^n
      (z_i - \theta)^2
    \right)
  }
  {
    \exp
    \left(
      -\frac12
      \sum_{i=1}^n
      z_i^2
    \right)
  }
  =
  \exp
  \left(
    \theta
    \sum_{i=1}^n
    z_i
    -
    \frac12
    n
    \theta^2
  \right).
\end{equation}
The corresponding optimal \e-power is
\begin{equation}\label{eq:optimal}
  \int \log E_{\theta} \d Q_{\theta}
  =
  \theta
  n
  \theta
  -
  \frac12
  n
  \theta^2
  =
  \frac12
  n \theta^2.
\end{equation}

The interpretation of the optimal \e-power \eqref{eq:optimal}
usually depends on the law of large numbers and its refinements
(such as the central limit theorem and large deviation inequalities).
The presence of $\log$ in the definition $\int \log E \d Q$ of the \e-power of $E$ under the alternative $Q$
reflects the fact that a typical \e-value is obtained by multiplying components
coming from the individual observations $z_i$.
This can be seen from \eqref{eq:LR}
(and also expressions \eqref{eq:E-Fisher}, \eqref{eq:equivalent}, and \eqref{eq:E-Wilcoxon-full} below,
which are typical).
Taking the logarithm leads to a much more regular distribution,
which is, e.g., approximately Gaussian under standard regularity conditions.
In the case of \eqref{eq:LR},
the key component of the logarithm is $\sum_{i=1}^n z_i$,
and we can apply, e.g., the central limit theorem
to see that the observed \e-power is between the narrow limits
$\frac12 n \theta^2 \pm c \sqrt{n} \theta$
with probability close (in this particular case, even exactly equal) to $\Phi(c)-\Phi(-c)$,
where $c>0$ and $\Phi$ is the standard Gaussian cumulative distribution function.

\begin{remark}\label{rem:full}
  To get the full idea of the power of $E$ under $Q$,
  we need the whole distribution of the observed \e-power $\log E$ under $Q$,
  and replacing it by its expectation is a crude step.
  (The next step might be, e.g.,
  complementing the expectation with the standard deviation of $\log E$ under $Q$.)
  We leave such more realistic notions of power for future research.
\end{remark}

We regard the family \eqref{eq:LR} of \e-variables
as a test (an \emph{\e-test}) of the null hypothesis $N(0,1)$.
While for several important statistical models
there are uniformly most powerful \p-tests
(see, e.g., \cite[Chap.~3]{Lehmann/Romano:2022}),
this is not the case for \e-tests,
and the \e-tests considered in this paper are always families of \e-variables.

The fact that the \e-variable \eqref{eq:LR} depends on the unknown alternative parameter $\theta$
is a disadvantage.
A natural way out is to integrate it under the prior distribution $N(0,1)$ over $\theta$,
which gives us the \e-variable
\begin{multline}\label{eq:Gaussian-mix}
  \frac{1}{\sqrt{2\pi}}
  \int
  \exp
  \left(
    \theta
    \sum_{i=1}^n
    z_i
    -
    \frac12
    n
    \theta^2
    -
    \frac12
    \theta^2
  \right)
  \dd\theta\\
  =
  \sqrt{\frac{1}{n+1}}
  \exp
  \left(
    \frac{1}{2n+2}
    \left(
      \sum_{i=1}^n
      z_i
    \right)^2
  \right)
\end{multline}
(cf.\ Remark~\ref{rem:formula} below).
Notice that the operation of integration makes the \e-variable ``two-sided'':
while \eqref{eq:LR} is monotone in $\sum_i z_i$,
\eqref{eq:Gaussian-mix} is monotone in $\left|\sum_i z_i\right|$.
The remaining disadvantage of the \e-variable \eqref{eq:Gaussian-mix}
is that it is valid only under the simple Gaussian null hypothesis $N(0,1)$.
In the following sections we will replace this simple null hypothesis
with a composite nonparametric one.

\begin{remark}\label{rem:formula}
  In our computations in this paper we often use the formula
  \begin{equation*}
    \int
    \exp
    \left(
      -A x^2
      +
      B x
    \right)
    \dd x
    =
    \sqrt{\frac{\pi}{A}}
    \exp
    \left(
      \frac{B^2}{4A}
    \right),
  \end{equation*}
  where $A>0$ and $B\in\R$.
 Equations~\eqref{eq:LR} and~\eqref{eq:Gaussian-mix} are simple calculations,
 and they appear in the context of mixture martingales,
 which date back to, at least, the work of Robbins
 (e.g., \cite{Robbins:1970});
 see also the more recent \cite{Howard/etal:2021} and the references therein.
\end{remark}

\section{Fisher-type nonparametric \e-test of symmetry}
\label{sec:Fisher}

Let $Z_1,\dots,Z_n$ be continuous IID random variables.
We are interested in the null hypothesis that their distribution is symmetric around 0.
This is an example of a nonparametric hypothesis,
since the distribution of $Z_1,\dots,Z_n$ is not described in a natural way
by finitely many real-valued parameters.
Intuitively, we are interested in two alternatives:
the one-sided alternative that $Z_i$, even though IID, are not symmetric but shifted to the right;
and the two-sided alternative that $Z_i$ are shifted to the right or to the left.

A typical case in applications is where $Z_i:=Y_i-X_i$,
$X_i$ is a pre-treatment measurement, and $Y_i$ is a post-treatment measurement,
and we are interested in whether the treatment has any effect.
Assuming that raising $X_i$ is desirable,
the one-sided alternative is that the treatment is beneficial.

We will formalize our null hypothesis
in a way similar to repetitive and one-off structures
\cite[Sects.~11.2.4 and~11.2.5]{Vovk/etal:2022book}.
However, we will not need general definitions
and will adapt them to our special case.

The \emph{symmetry model} for a sample size $n$
is the pair $(t,b)$,
where $t:\R^n\to\Sigma$ is the mapping
\[
  t:
  (z_1,\dots,z_n)
  \mapsto
  \left(
    \left|z_1\right|,
    \dots,
    \left|z_n\right|
  \right)
\]
from the \emph{sample space} $\R^n$ to the \emph{summary space} $[0,\infty)^n$,
and $b$ is the Markov kernel that maps each summary $(z_1,\dots,z_n)\in[0,\infty)^n$
to the uniform probability measure on the set
\begin{equation}\label{eq:b}
  t^{-1}(z_1,\dots,z_n)
  =
  \left\{
    (j_1 z_1,\dots,j_n z_n)
    \mid
    (j_1,\dots,j_n)\in\{-1,1\}^n
  \right\}.
\end{equation}
An \emph{\e-variable} for testing the null hypothesis of symmetry
is a function $E:\R^n\to[0,\infty]$
such that $\int E \d b(t(z_1,\dots,z_n))\le1$ for all $z_1,\dots,z_n$.
It is \emph{admissible} if $\le$ holds as $=$ for all $z_1,\dots,z_n$;
in other words, if it ceases to be an \e-variable (w.r.\ to the symmetry model)
as soon as its value is increased at any point.

\begin{remark}
  The definition of admissibility that we give
  is adapted to our current context;
  see \cite[Sect.~9]{Ramdas/etal:arXiv2009} for a more general discussion.
\end{remark}

In this section we define the first of our three \e-tests for testing symmetry.
We are interested in the \e-variables of the form
\begin{equation}\label{eq:E-exp}
  E_{\lambda}(z_1,\dots,z_n)
  :=
  \exp
  \left(
    \lambda S(z_1,\dots,z_n) - C
  \right),
\end{equation}
where $S(z_1,\dots,z_n) := \sum_{i=1}^n z_i$,
$\lambda>0$ is a positive parameter,
and $C$ is chosen to make $E$ an admissible \e-variable,
i.e.,
\[
  C
  =
  C(\lambda,t(z_1,\dots,z_n))
  :=
  \log
  \int
  \exp(\lambda S)
  \dd b(t(z_1,\dots,z_n))
\]
(in other words, $C:=\log\E\exp(\lambda S)$,
the expectation being under the null hypothesis,
i.e., under the symmetry model).
Lemma~\ref{lem:C-Fisher} will give a convenient formula for computing $C$.

The form \eqref{eq:E-exp} for our \e-variables
can be justified by the analogy with the \e-variable \eqref{eq:LR}
that we obtained in the Gaussian case.
The expression for the normalizing constant $C$ will, however, be different
and will be derived momentarily.

The justification of the symmetry model
from the point of view of standard statistical modelling
is that, under the null hypothesis of symmetry,
$t$ is a sufficient statistic
giving rise to $b$ as conditional distribution.

For simplicity, we will assume that $z_1,\dots,z_n$ are all different
(under our assumption that the random variables $Z_1,\dots,Z_n$ are continuous,
the realizations will be all different almost surely).

\begin{lemma}\label{lem:C-Fisher}
  The value of $C$ in \eqref{eq:E-exp} is given by
  \begin{equation}\label{eq:C-Fisher}
    C
    =
    \sum_{i=1}^n
    \log
    \frac{e^{\lambda z_i}+e^{-\lambda z_i}}{2}.
  \end{equation}
\end{lemma}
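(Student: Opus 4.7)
The plan is to compute $\exp(C)$ directly from its definition as the expectation $\E\exp(\lambda S)$ under the symmetry model, exploiting the product structure of $\exp(\lambda S) = \prod_{i=1}^n \exp(\lambda z_i)$ together with the fact that the Markov kernel $b(t(z_1,\dots,z_n))$ is the \emph{uniform} measure on the finite set \eqref{eq:b} of sign-flipped tuples.

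First, I would unfold the integral as a sum over the $2^n$ sign patterns. Since $b(t(z_1,\dots,z_n))$ puts mass $2^{-n}$ on each point $(j_1 z_1,\dots,j_n z_n)$ with $(j_1,\dots,j_n)\in\{-1,1\}^n$, we have
\[
  \int \exp(\lambda S) \dd b(t(z_1,\dots,z_n))
  =
  \frac{1}{2^n}
  \sum_{(j_1,\dots,j_n)\in\{-1,1\}^n}
  \exp\!\left(\lambda \sum_{i=1}^n j_i z_i\right).
\]
Next, I would factor the exponential of the sum as a product of exponentials and interchange sum and product (this is where the independence across coordinates under the uniform product measure on $\{-1,1\}^n$ pays off):
\[
  \frac{1}{2^n}
  \sum_{(j_1,\dots,j_n)}
  \prod_{i=1}^n \exp(\lambda j_i z_i)
  =
  \prod_{i=1}^n
  \frac{1}{2}
  \sum_{j_i\in\{-1,1\}} \exp(\lambda j_i z_i)
  =
  \prod_{i=1}^n \frac{e^{\lambda z_i}+e^{-\lambda z_i}}{2}.
\]
Taking logarithms then yields \eqref{eq:C-Fisher}.

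There is no real obstacle: the only thing to check carefully is the factorization step, which is just the identity $\sum_{j\in\{-1,1\}^n}\prod_i f_i(j_i)=\prod_i\sum_{j_i\in\{-1,1\}}f_i(j_i)$ applied to $f_i(j)=\exp(\lambda j z_i)$. The assumption that the $z_i$ are all distinct is not actually needed for this calculation; it only matters for the probabilistic interpretation via continuity, so I would not invoke it in the proof.
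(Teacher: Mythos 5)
Your proposal is correct and is essentially identical to the paper's proof: both compute $e^C$ as the uniform average of $\exp(\lambda\sum_i j_i z_i)$ over the $2^n$ sign patterns and factor it into $2^{-n}\prod_{i=1}^n\bigl(e^{\lambda z_i}+e^{-\lambda z_i}\bigr)$. Your side remark that distinctness of the $z_i$ is not needed for this calculation is also accurate.
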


\begin{proof}
  We find
  \begin{equation*}
    e^C
    =
    2^{-n}
    \sum_{j_1=0}^1
    \dots
    \sum_{j_n=0}^1
    e^{\lambda j_1 z_1 + \dots + \lambda j_n z_n}
    =
    2^{-n}
    \prod_{i=1}^n
    \left(
      e^{\lambda z_i}
      +
      e^{-\lambda z_i}
    \right).
  \end{equation*}
  (Alternatively,
  we can see straight away that the average of \eqref{eq:E-Fisher} below
  w.r.\ to $b(t(z_1,\dots,z_n))$ is 1.)
\end{proof}

Plugging \eqref{eq:C-Fisher} into \eqref{eq:E-exp}
gives the \e-variable
\begin{equation}\label{eq:E-Fisher}
  E_{\lambda}(z_1,\dots,z_n)
  =
  e^{-C}
  \prod_{i=1}^n
  e^{\lambda z_i}
  =
  \prod_{i=1}^n
  \frac
  {e^{\lambda z_i}}
  {
    \frac12
    \left(
      e^{\lambda z_i}
      +
      e^{-\lambda z_i}
    \right)
  }.
\end{equation}
This is an \e-version of Fisher's permutation test,
which he introduced and applied to Charles Darwin's data \cite[Chap.~1]{Darwin:1876}
in his 1935 book \cite[Sects.~21 and~21.1]{Fisher:1935local} on experimental design.

Again, since there is no uniformly most powerful \e-test,
we consider a family of \e-variables.
The \e-variable \eqref{eq:E-Fisher} is, of course, admissible.

\begin{figure*}[htbp]
\begin{center}
  \includegraphics[width=10cm]{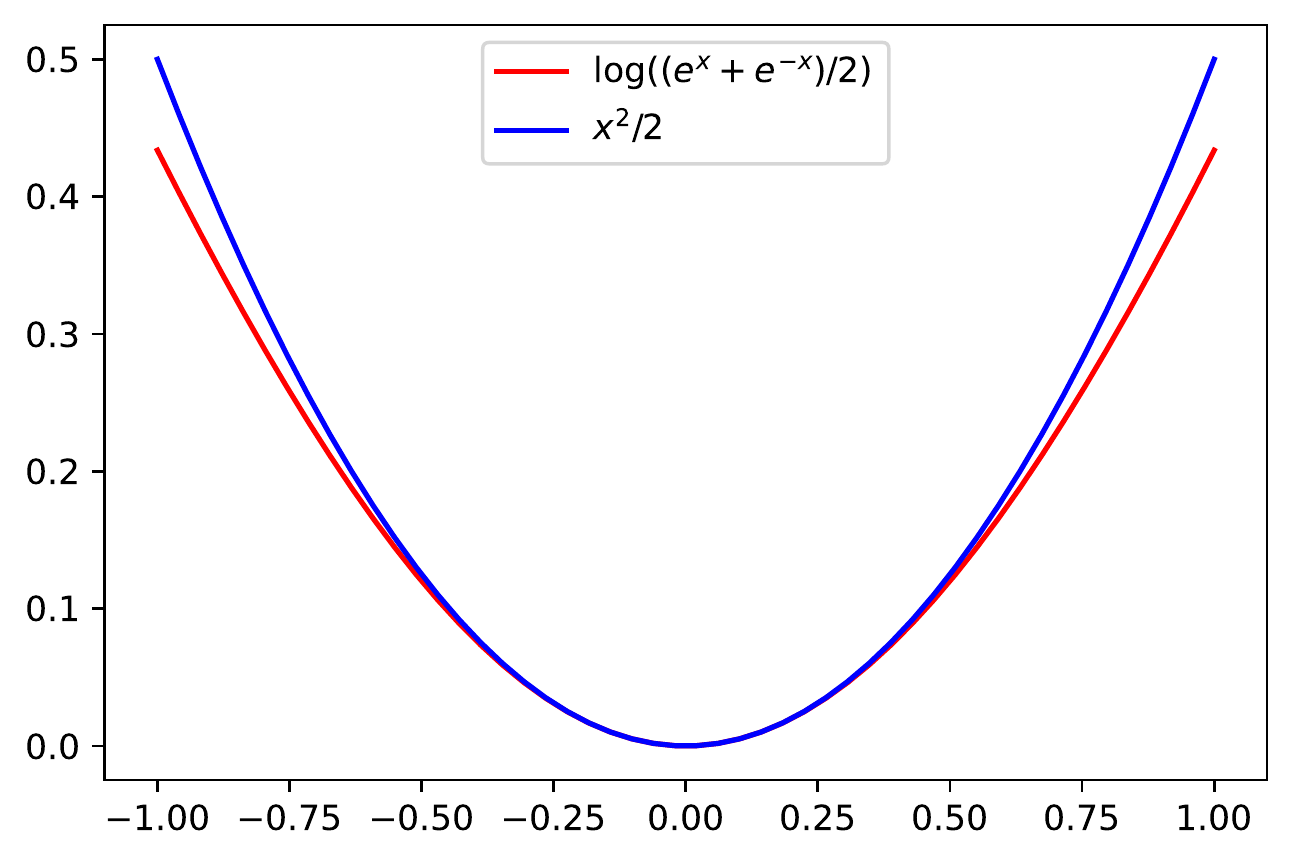}
\end{center} 
\caption{The inequality \eqref{eq:inequality} on the log scale}\label{fig:inequality}
\end{figure*}

The \e-variable \eqref{eq:E-Fisher} dominates
\begin{equation}\label{eq:de_la_Pena}
  E'_{\lambda}(z_1,\dots,z_n)
  :=
  \prod_{i=1}^n
  e^{\lambda z_i - \lambda^2 z_i^2/2},
\end{equation}
in the sense $E'\le E$.
Therefore, $E'$ is also an \e-variable, albeit inadmissible in general.
To check the inequality $E'\le E$,
it suffices to check that
\begin{equation}\label{eq:inequality}
  \frac12
  \left(
    e^x
    +
    e^{-x}
  \right)
  \le
  e^{x^2/2}.
\end{equation}
Expanding both sides into Taylor's series shows that this inequality indeed holds for all $x$.
The inequality is not excessively loose, especially for small values of $x$
(which will be the case that we will be interested in when computing the Pitman efficiencies):
cf.\ Figure \ref{fig:inequality}.

\begin{remark}
  The fact that \eqref{eq:de_la_Pena} is an \e-variable
  was established by de la Pe\~na \cite[Lemma 6.1]{deLaPena:1999}.
  Ramdas et al.\ \cite[Sect.~10]{Ramdas/etal:arXiv2009}
  point out that it is inadmissible,
  and they define several natural admissible alternatives
  to \eqref{eq:E-Fisher}.
  Investigating the asymptotic relative efficiency of those admissible alternatives
  is an interesting direction of further research.
\end{remark}

In order to get rid of the dependence of \eqref{eq:E-Fisher} or \eqref{eq:de_la_Pena}
on $\lambda$,
we can integrate these expression over a prior distribution on $\lambda$.
This can be easily done explicitly
(see Remark~\ref{rem:formula}) in the case of \eqref{eq:de_la_Pena}
and the prior distribution $N(0,1)$ on $\lambda$:
\begin{equation}\label{eq:Fisher-mix}
  \frac{1}{\sqrt{2\pi}}
  \int
  \prod_{i=1}^n
  e^{\lambda z_i - \lambda^2 z_i^2/2 - \lambda^2/2}
  \d\lambda
  =
  \sqrt{\frac{1}{1+\sum_{i=1}^n z_i^2}}
  \exp
  \left(
    \frac{\left(\sum_{i=1}^n z_i\right)^2}{2+2\sum_{i=1}^n z_i^2}
  \right).
\end{equation}

The right-hand side of \eqref{eq:Fisher-mix} is close to the right-hand side of \eqref{eq:Gaussian-mix}
under $N(0,1)$ as the null hypothesis:
this follows from $\sum_{i=1}^n z_i^2 \approx n$
(for large $n$ and with high probability).
However (as noticed in \cite{deLaPena:1999}), this relatively small change
drastically changes the property of validity of the \e-test:
while the right-hand side of \eqref{eq:Gaussian-mix} is an \e-test of $N(0,1)$ only,
the right-hand side of \eqref{eq:Fisher-mix}
is an \e-test of the nonparametric hypothesis of symmetry.

\subsection*{Results for Charles Darwin's data}

In this subsection we will compute Fisher-type nonparametric \e-values
for data used by Darwin \cite[Chap.~1]{Darwin:1876}
to test whether cross-fertilization of plants was advantageous to the progeny
as compared with self-fertilization.
This was an important question from the evolutionary point of view,
and Darwin's preliminary work had convinced him
that cross-fertilization was indeed advantageous;
in particular, nature went to great lengths to prevent self-fertilization
\cite{Darwin:1862}.

\begin{table}
  \begin{center}
  \begin{tabular}{ccc}
    49 & 23 & 56 \\
    $-67$ & 28 & 24 \\
    8 & 41 & 75 \\
    16 & 14 & 60 \\
    6 & 29 & $-48$
  \end{tabular}
  \end{center}
  \caption{Differences in eighths of an inch
    between cross- and self-fertilised plants of the same pair
    (Table~3 in \cite[Sect.~17]{Fisher:1935local})}\label{tab:Darwin}
\end{table}

Table~\ref{tab:Darwin} reports results for a small subset of Darwin's data,
those for maize.
This subset was analyzed for Darwin by Francis Galton
(as Darwin describes in detail in \cite[Chap.~1]{Darwin:1876})
and was reanalyzed by Fisher in \cite[Chap.~3]{Fisher:1935local}.
Fisher offered both parametric analysis (assuming the Gaussian distribution)
and novel nonparametric analysis,
and his finding was that Student's t-test and Fisher's nonparametric test
produce remarkably similar results.

Table~\ref{tab:Darwin} lists the differences in height between 15 pairs of matched plants,
with a cross- and self-fertilized plant in each pair
(meaning a plant grown from a cross- or self-fertilized seed, respectively).
A positive difference means that the cross-fertilized plant is taller,
which we \emph{a priori} expect to happen more often.
Fisher was interested in two alternatives to the null hypothesis of symmetry:
the one-sided alternative of positive observations being more common than negative ones
and the two-sided alternative of asymmetry
(with positive observations being either more or less common than negative ones).

Fisher's \p-value for testing the one-sided hypothesis is 2.634\%,
and his \p-value for testing the two-sided hypothesis is twice as large, 5.267\%.
Therefore, the one-sided \p-value is significant but not highly significant,
whereas the two-sided \p-value is not even significant.

\begin{figure*}[htbp]
\begin{center}
  \includegraphics[width=10cm]{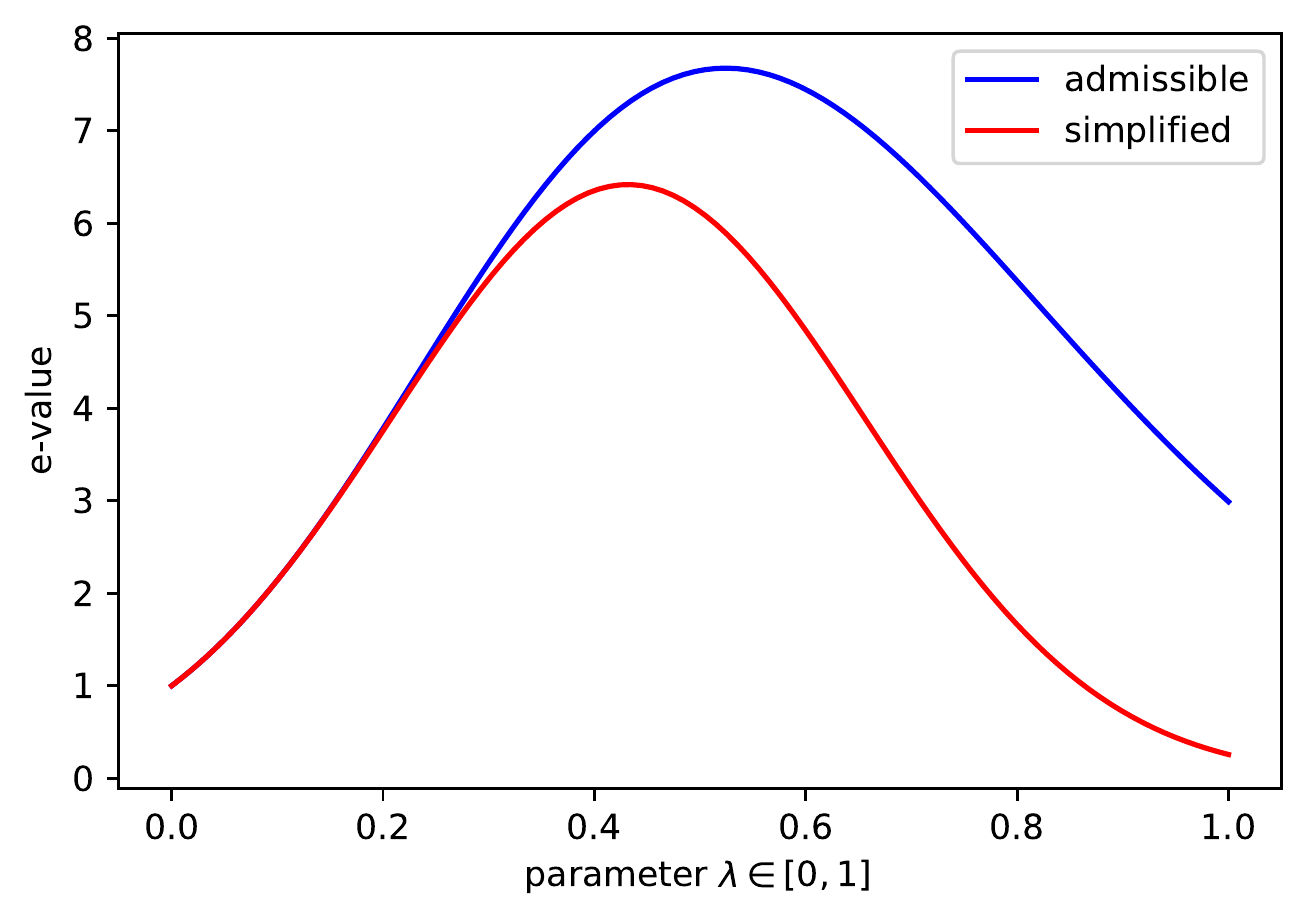}
\end{center} 
\caption{Results for the Fisher-type \e-test applied to Darwin's data}\label{fig:Darwin}
\end{figure*}

Figure~\ref{fig:Darwin} plots the Fisher-type admissible \e-values \eqref{eq:E-Fisher}
(in blue)
and the simplified \e-values \eqref{eq:de_la_Pena}
(in red)
for the parameter $\lambda$ in the range $[0,1]$.
The meaning of $\lambda$ depends on the scale of the numbers $z_1,\dots,z_{15}$
in Table~\ref{tab:Darwin},
and in order to make $\lambda$ less arbitrary we normalize $z_1,\dots,z_{15}$
by dividing them by the standard deviation of these 15 numbers.
Jeffreys's \cite[Appendix~B]{Jeffreys:1961} rule of thumb
is to consider an \e-value of 10 as being analogous to a \p-value of $1\%$
and to consider an \e-value of $\sqrt{10}\approx3.162$
as being analogous to a \p-value of $5\%$.
(See \cite[Sect.~2]{Vovk/Wang:2021} for a more detailed discussion
of relations between \e-values and \p-values.)
This makes Figure~\ref{fig:Darwin} roughly comparable to Fisher's \p-values,
especially if we ignore the inadmissible simplified \e-values.
If we guess in advance that $\lambda:=0.5$ is a good parameter value,
we will get an \e-value of $7.651$.
More realistically, averaging the \e-values for $\lambda\in[0,1]$
will give the one-sided \e-value $5.149$.
Replacing $\lambda\in[0,1]$ by $\lambda\in[-1,1]$
gives the two-sided \e-value $2.633$ not reaching the threshold of $\sqrt{10}$.

\section{Pitman-type asymptotic relative efficiency}
\label{sec:power}

The following definition is in the spirit of Pitman's definition,
which can be found in, e.g., \cite[Sect.~14.3]{vanderVaart:1998}.
Let $(Q_{\theta}\mid\theta\in\Theta)$ be a statistical model,
i.e., a set of probability measures on the real line $\R$,
with the observations generated from one of those probability measures in the IID fashion.
We assume, for simplicity, that $\Theta=\R$ and regard $Q_0$ as the null hypothesis;
informally, the alternative is either one-sided, $\theta>0$,
or two-sided, $\theta\ne0$
(for specific \e-tests,
we will have the same results for one-sided and two-sided Pitman efficiency).
By an \e-variable we mean an \e-variable w.r.\ to $Q_0^n$.
In our asymptotic framework we consider sequences of parameter values $\theta_{\nu}$
that depend on the ``difficulty'' $\nu=1,2,\dots$ of our testing problem;
in the one-sided case we will assume $\theta_{\nu}\downarrow0$
(the sequence is strictly decreasing and converges to 0),
and in the two-sided case we will assume $\theta_{\nu}\to0$.

Let $\EEE^n_1$ and $\EEE^n_2$ be families of \e-variables on $\R^n$;
we are interested in the case where $\EEE^n_1$ is a family of interest to us
(a nonparametric \e-test such as \eqref{eq:E-Fisher} above,
or \eqref{eq:E-sign-lambda} or \eqref{eq:E-Wilcoxon} below)
and $\EEE^n_2$ is the baseline family of all \e-variables on $\R^n$.
The \emph{asymptotic relative efficiency} of $\EEE^n_1$ w.r.\ to $\EEE^n_2$ is $c$
if, for any $\beta>0$
and any $\theta_\nu\downarrow0$ (one-sided case) or $\theta_\nu\to0$ (two-sided case),
we have $n_{\nu,2}/n_{\nu,1}\to c$,
where $n_{\nu,j}$, $j=1,2$, is the minimal number of observations $n$ such that
\[
  \exists E\in\EEE^n_j:
  \int \log E \d Q^n_{\theta_\nu} \ge \beta.
\]
For example, if the asymptotic relative efficiency is 0.5,
the best \e-test in $(\EEE^n_1)$ requires twice as many observations $n$
as the best test in $(\EEE^n_2)$
to achieve the same \e-power (if the best \e-tests exist).

\begin{figure*}[htbp]
\begin{center}
  \includegraphics[width=12cm, trim={8cm 18.5cm 0 1.3cm}, clip]{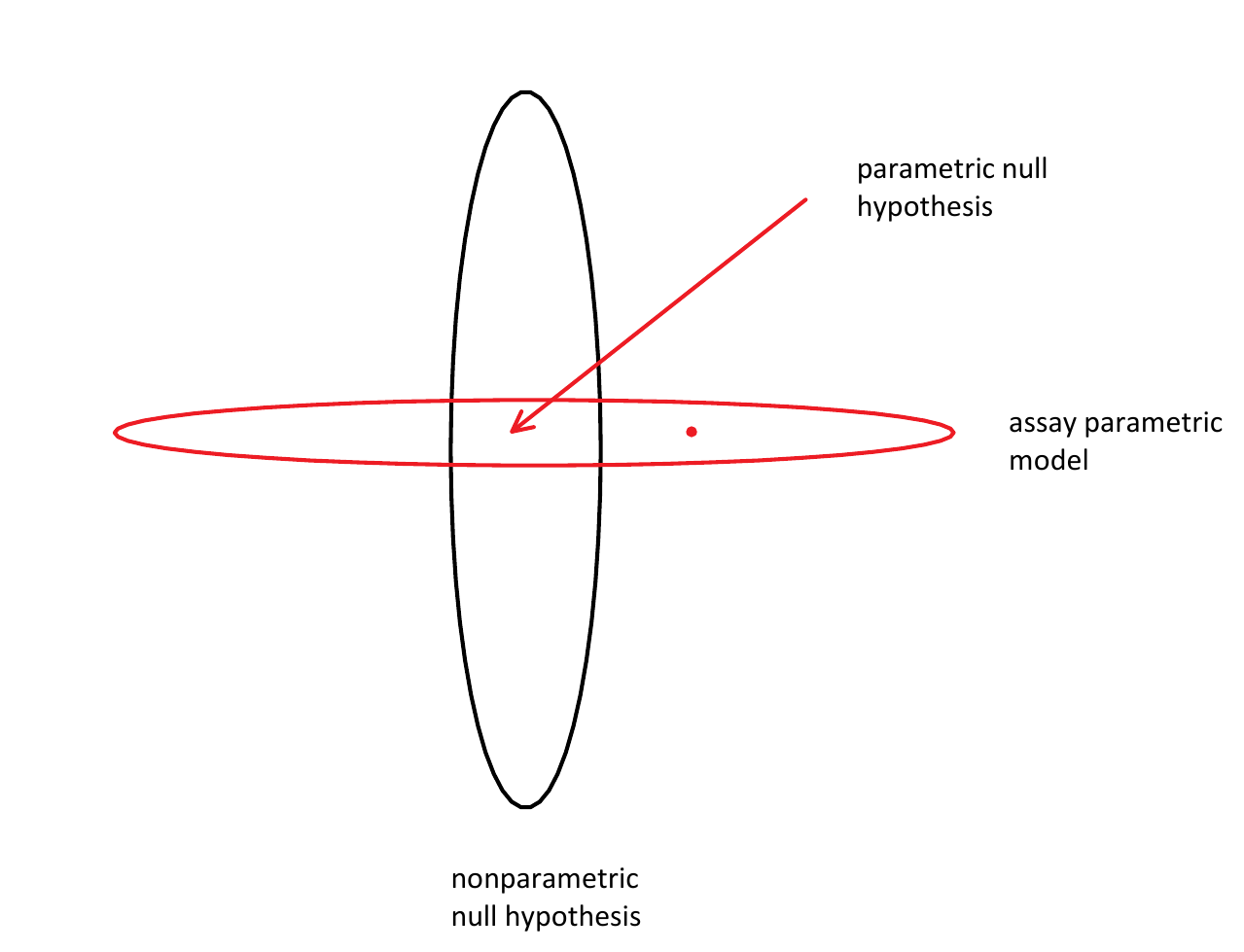}
\end{center} 
\caption{Assaying a non-parametric \e-test}\label{fig:ARE}
\end{figure*}

The idea of using an auxiliary parametric statistical model $(Q_{\theta})$,
such as the Gaussian model, to assay the efficiency of nonparametric \e-tests
is illustrated in Figure~\ref{fig:ARE}.
We are testing a nonparametric null hypothesis
(the hypothesis of symmetry in this paper),
but we are afraid that for a popular parametric model
(the Gaussian model $Q_{\theta}:=N(\theta,1)$ in this paper,
which plays the role of an \emph{assay statistical model})
our testing method loses a lot.
We are interested in the case where the intersection
between the nonparametric null hypothesis and the assay model
contains only one probability measure;
we refer to this intersection
as the \emph{parametric null hypothesis} in Figure~\ref{fig:ARE}
(in this paper, it is $\{N(0,1)\}$).
For a given simple alternative hypothesis $Q=Q_{\theta}$ in the assay model
(shown as the red dot in Figure~\ref{fig:ARE}),
we are hoping to show that the best \e-power
achieved for testing the simple parametric null hypothesis vs $Q$
is not much better than the best \e-power
achieved for testing the composite (and usually massive) nonparametric null hypothesis.
Or, if Pitman-type notion of efficiency is to be used (as in this paper),
that the same \e-power is attained for numbers of observations
that are not wildly different.

Our use of the Gaussian model with variance 1 as assay model
motivates using \eqref{eq:E-exp}
with $S(z_1,\dots,z_n):=z_1+\dots+z_n$ as a nonparametric \e-test.
The sign and Wilcoxon versions will be natural modifications
(corresponding to relaxing the symmetry assumption,
as explained in Remark~\ref{rem:relax} below).

For all three nonparametric \e-tests considered in this paper
(Sects.~\ref{sec:ARE}--\ref{sec:Wilcoxon} below)
we will need the number $n_{\nu,2}$ of observations required by our baseline,
which is, by Lemma~\ref{lem:power},
the likelihood ratio $\dd N(\theta_\nu,1) / \dd N(0,1)$.
By \eqref{eq:optimal},
achieving an \e-power of $\beta$ requires approximately
\begin{equation}\label{eq:baseline}
  2\beta\theta_\nu^{-2}
\end{equation}
observations (namely, $\lceil 2\beta\theta_\nu^{-2}\rceil$ observations).

\begin{remark}
  In the context of regular statistical models such as Gaussian,
  it is natural to set $\theta_\nu:=c\nu^{-1/2}$.
  In this case the ``difficulty'' $\nu$
  (referred to as ``time'' in \cite[Sect.~14.3]{vanderVaart:1998})
  becomes proportional to the number of observations
  required to achieve a given \e-power.
\end{remark}

\section{Asymptotic efficiency of the Fisher-type \e-test}
\label{sec:ARE}

In the classical case, the relative efficiency of Fisher's test is 1
\cite[Chapter 7, Example 4.1]{Fraser:1957},
as first shown by Hoeffding \cite{Hoeffding:1952}
(according to Mood \cite{Mood:1954}).
Let us check that this remains true for the \e-version as well.

First we find informally a suitable \e-variable in the family \eqref{eq:E-Fisher}
and then show that it requires the optimal number \eqref{eq:baseline} of observations
to achieve an \e-power of $\beta$.
Under the symmetry model,
each observation $z_i$ is split
into its magnitude $m_i:=\left|z_i\right|$ and sign $s_i:=\sign(z_i)$.
Given the magnitudes, the signs are independent and
$
  \P(s_i=1)
  =
  1/2
$
under the null hypothesis $N(0,1)$ and
\begin{align*}
  \P(s_i=1)
  &=
  \frac
  {
    \exp
    \left(
      -\frac12(m_i-\theta_\nu)^2
    \right)
  }
  {
    \exp
    \left(
      -\frac12(m_i-\theta_\nu)^2
    \right)
    +
    \exp
    \left(
      -\frac12(-m_i-\theta_\nu)^2
    \right)
  }\\
  &=
  \frac
  {
    \exp
    \left(
      \theta_\nu m_i
    \right)
  }
  {
    \exp
    \left(
      \theta_\nu m_i
    \right)
    +
    \exp
    \left(
      -\theta_\nu m_i
    \right)
  }
\end{align*}
under the alternative hypothesis $N(\theta_\nu,1)$.
The conditional likelihood ratio for the signs is
\begin{equation*}
  \prod_{i=1}^n
  \frac
  {
    2\exp
    \left(
      \theta_\nu z_i
    \right)
  }
  {
    \exp
    \left(
      \theta_\nu m_i
    \right)
    +
    \exp
    \left(
      -\theta_\nu m_i
    \right)
  }
  =
  \prod_{i=1}^n
  \frac
  {
    \exp
    \left(
      \theta_\nu z_i
    \right)
  }
  {
    1
    +
    \theta_\nu^2 m_i^2 / 2
    +
    o(\theta_\nu^2 m_i^2)
  }.
\end{equation*}
This is Fisher's \e-test \eqref{eq:E-Fisher} corresponding to $\lambda:=\theta_\nu$.
Its observed \e-power is
\begin{equation*}
  \sum_{i=1}^n
  \left(
    \theta_\nu z_i
    -
    \theta_\nu^2 m_i^2 / 2
    +
    o(\theta_\nu^2 m_i^2)
  \right)
  =
  \theta_\nu
  \sum_{i=1}^n
  z_i
  -
  (1+o(1))
  \frac{\theta_\nu^2}{2}
  \sum_{i=1}^n
  m_i^2.
\end{equation*}
Since, under the alternative hypothesis $N(\theta_\nu,1)$,
\[
  \E
  \sum_{i=1}^n
  z_i
  =
  n \theta_\nu
\]
and
\[
  \E
  \sum_{i=1}^n
  m_i^2
  =
  \E
  \sum_{i=1}^n
  z_i^2
  =
  n
  +
  n \theta_\nu^2
  =
  (1+o(1))
  n,
\]
the \e-power is
\[
  n \theta_\nu^2
  -
  (1+o(1))
  \frac{\theta_\nu^2}{2}
  n  
  \sim
  \frac12 n \theta_\nu^2.
\]
We obtain the optimal \e-power \eqref{eq:optimal}
with $\theta=\theta_{\nu}$,
and so the asymptotic relative efficiency of Fisher's \e-test is 1.

\section{Sign \e-test}
\label{sec:sign}

In this and following sections we use \eqref{eq:E-exp}
for different statistics $S$,
and with $C$ still chosen to make $E_{\lambda}$ an admissible \e-variable.
In this section we make the simplest choice of $S(z_1,\dots,z_n)$ in \eqref{eq:E-exp},
which is the number $k$ of positive $z_i$ among $z_1,\dots,z_n$.
This gives the \emph{sign \e-test} with parameter $\lambda>0$.
The use of the signs for hypothesis testing goes back to \cite{Arbuthnott:1710}.

To obtain a useful alternative representation of the sign \e-test,
let $p\in(0,1)$ be defined by the equation
\[
  \frac{p}{1-p}
  =
  e^{\lambda}
\]
(so that $\lambda$ becomes the log-odds ratio).
The \e-variable \eqref{eq:E-exp} then becomes
\begin{equation}\label{eq:E-pre-p}
  E_{\lambda}(z_1,\dots,z_n)
  =
  e^{\lambda k - C}
  =
  p^k(1-p)^{-k}
  e^{-C}
  =
  \frac{p^k(1-p)^{n-k}}{2^{-n}}.
\end{equation}
The last expression is the likelihood ratio of an alternative to the null hypothesis,
and so is an admissible \e-variable.
This gives us the representation
\begin{equation}\label{eq:equivalent}
  E_p(z_1,\dots,z_n)
  :=
  \frac{p^k(1-p)^{n-k}}{2^{-n}}
\end{equation}
of the sign \e-test.

The equality between the last two terms in \eqref{eq:E-pre-p}
gives an explicit expression for $C$,
\[
  C
  =
  -n\log(2(1-p))
  =
  n\log\frac{1+e^{\lambda}}{2},
\]
which in turn gives the alternative representation
\begin{equation}\label{eq:E-sign-lambda}
  E_{\lambda}(z_1,\dots,z_n)
  =
  e^{\lambda k}
  \left(\frac{2}{1+e^{\lambda}}\right)^{n}
\end{equation}
of the sign \e-test.

In view of our informal alternative hypothesis,
we are often interested in $\lambda>0$, i.e., $p>1/2$.

\begin{remark}\label{rem:relax}
  Notice that in this section we are actually testing
  a wider null hypothesis than the symmetry model,
  since the magnitudes of $z_i$ do not matter.
  Namely, the sign \e-test is valid for testing the hypothesis
  that the signs of $Z_1,\dots,Z_n$ are $\pm1$ independently.
  A similar remark can also be made about the nonparametric \e-test
  discussed in the following section,
  which in fact tests an intermediate null hypothesis.
\end{remark}

As before,
we have a dependence of the sign \e-test \eqref{eq:equivalent}
on a parameter, $p$.
To get rid of this dependence,
we can, e.g., integrate \eqref{eq:equivalent} over $p\in[0,1]$, obtaining
\begin{equation*}
  E(z_1,\dots,z_n)
  :=
  2^n \mathrm{B}(k+1,n-k+1),
\end{equation*}
where $\mathrm{B}$ is the beta function.
For testing the one-sided hypothesis we can integrate \eqref{eq:equivalent}
over the uniform probability measure on $[0.5,1]$, which gives
\begin{equation*}
  E(z_1,\dots,z_n)
  :=
  2^{n+1}
  \bigl(
    \mathrm{B}(k+1,n-k+1)
    -
    \mathrm{B}(0.5;k+1,n-k+1)
  \bigr),
\end{equation*}
where the second entry of $\mathrm{B}$ stands for the incomplete beta function.

\subsection*{Efficiency of the sign test}

In this and next sections we consider the same assay parametric model
and still assume that the null hypothesis is $N(0,1)$
and the alternative is $N(\theta_\nu,1)$.
Suppose we only observe the signs $s_i$ of $z_i$,
which is sufficient when testing the null hypothesis with the sign \e-test.
By Lemma~\ref{lem:power} the largest \e-power for an \e-variable of this kind
will be achieved by the likelihood ratio for the signs.

The sign of $Z_i$ is $1$ with probability $1/2$ under the null hypothesis
and $1/2+\tilde\theta_\nu/\sqrt{2\pi}$ under the alternative
for $\tilde\theta_{\nu}\sim\theta_{\nu}$,
due to the first-order Taylor approximation
of the standard Gaussian cumulative distribution function $\Phi$.
With $k$ being the number of positive $z_i$,
the likelihood ratio for the signs is
\begin{equation*}
  \frac
  {
    \left(
      \frac12
      +
      \frac{\tilde\theta_\nu}{\sqrt{2\pi}}
    \right)^k
    \left(
      \frac12
      -
      \frac{\tilde\theta_\nu}{\sqrt{2\pi}}
    \right)^{n-k}
  }
  {
    \left(
      1/2
    \right)^{n}
  }
  =
  \left(
    1
    +
    \sqrt{\frac{2}{\pi}} \tilde\theta_\nu
  \right)^k
  \left(
    1
    -
    \sqrt{\frac{2}{\pi}} \tilde\theta_\nu
  \right)^{n-k}.
\end{equation*}
This is an instance of the sign \e-test \eqref{eq:equivalent},
corresponding to $p=1/2+\tilde\theta_\nu/\sqrt{2\pi}$.
The observed \e-power of this \e-test is
\begin{multline*}
  k
  \log
  \left(
    1
    +
    \sqrt{\frac{2}{\pi}} \tilde\theta_\nu
  \right)
  +
  (n-k)
  \log
  \left(
    1
    -
    \sqrt{\frac{2}{\pi}} \tilde\theta_\nu
  \right)\\
  =
  (2k-n)
  \sqrt{\frac{2}{\pi}} \tilde\theta_\nu
  -
  \frac{1}{\pi} n \tilde\theta_\nu^2
  +
  o(n \tilde\theta_\nu^2)
\end{multline*}
(we have used the second-order Taylor approximation).
This gives the \e-power
\begin{equation*}
  \left(
    2
    \left(
      \frac12
      +
      \frac{\tilde\theta_\nu}{\sqrt{2\pi}}
    \right)
    n
    -
    n
  \right)
  \sqrt{\frac{2}{\pi}} \tilde\theta_\nu
  -
  \frac{1}{\pi} n \tilde\theta_\nu^2
  +
  o(n \tilde\theta_\nu^2)
  =
  \frac{1}{\pi} n \tilde\theta_\nu^2
  +
  o(n \tilde\theta_\nu^2)
  \sim
  \frac{1}{\pi} n \theta_\nu^2.
\end{equation*}
To achieve an \e-power of $\beta$,
the sign \e-test needs ${} \sim \pi \beta \theta_\nu^{-2}$ observations.
Therefore, the asymptotic efficiency of the sign \e-test is $2/\pi\approx0.64$,
exactly the same as in the standard case \cite[Example 3.1]{Fraser:1957}.
(In the standard case the sign test is usually compared with the t-test,
but in this paper we use an even more basic assay parametric model;
namely, we assume that the variance is known to be 1.)

Since the asymptotic efficiency is approximately 2/3,
we can say that the sign test wastes every third observation in our Gaussian setting.
This is the least efficient of the three nonparametric \e-tests considered in this paper
when efficiency is measured using the Gaussian assay model as yardstick.

\subsection*{Sign test for Darwin's data}

It is interesting that the sign test gives
the one-sided \p-value of $0.00369$
and the two-sided \p-value of $0.00739$.
In contrast with Fisher's \p-test,
both \p-values are highly significant,
the reason being that the two negative numbers in Table~\ref{tab:Darwin}
are so large in absolute value.

\begin{figure*}[htbp]
\begin{center}
  \includegraphics[width=10cm]{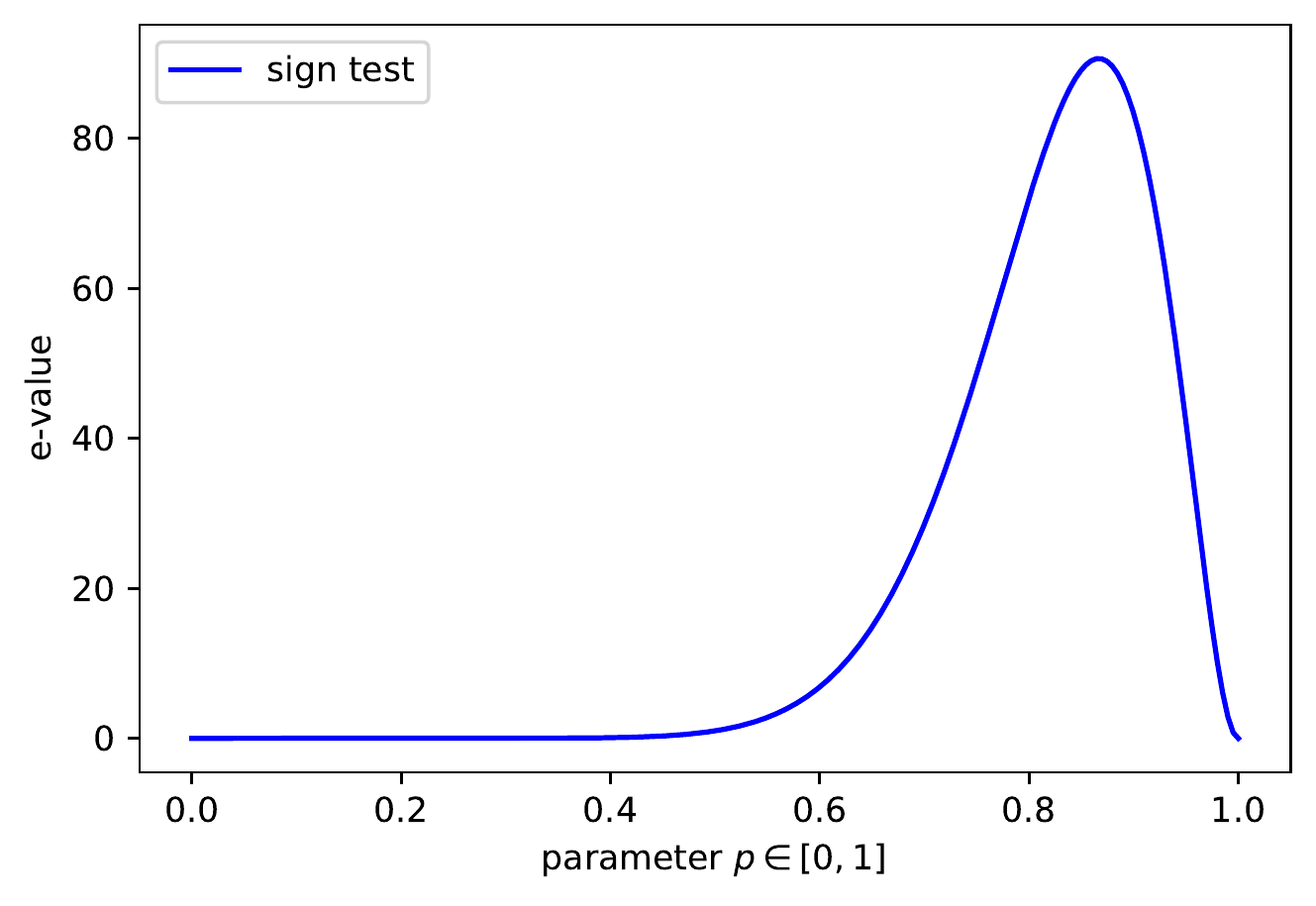}
\end{center} 
\caption{Results for the sign \e-test applied to Darwin's data}\label{fig:Darwin-sign}
\end{figure*}

Figure~\ref{fig:Darwin-sign} is an analogue of Figure~\ref{fig:Darwin}
for the sign test.
The attainable \e-values are now much larger,
and the average over all $p\in[0,1]$
is 19.310.
To use Jeffreys's \cite[Appendix~B]{Jeffreys:1961} expressions,
we have strong evidence against the null hypothesis
of cross- and self-fertilization being equally efficient.
The corresponding one-sided \e-value,
found as the average over all $p\in[0.5,1]$,
is 38.544,
and in Jeffreys's terminology it provides very strong evidence
(for cross-fertilization tending to produce taller plants,
in this context).

Table~\ref{tab:Darwin} comprises only small part
of the overwhelming evidence in favour of cross-fertilization
collected by Darwin over 11 years.
Darwin chose maize
to illustrate his and Galton's statistical methods in \cite[Chap.~1]{Darwin:1876},
but in \cite[Chaps.~2--6]{Darwin:1876} he has 99 similar tables
(with our Table~\ref{tab:Darwin} corresponding to Darwin's Table~97).
With this amount of evidence, statistics is hardly needed
to see that the evidence is really overwhelming.

\section{Wilcoxon's signed-rank \e-tests}
\label{sec:Wilcoxon}

Wilcoxon's signed-rank test \cite{Wilcoxon:1945}
is based on arranging the magnitudes $\lvert z_i\rvert$ of the observations
in the ascending order and assigning to each its \emph{rank},
which is a number in the range $\{1,\dots,n\}$:
the observation $z_i$ with the smallest $\lvert z_i\rvert$ gets rank 1,
the one with the second smallest $\lvert z_i\rvert$ gets rank 2, etc.
Notice that the symmetry model
(i.e., the uniform probability measure on \eqref{eq:b})
implies that for any set $A\subseteq\{1,\dots,n\}$,
the probability is $2^{-n}$
that the observations with the ranks in $A$ will be positive
and all other observations will be negative.
This determines the distribution (conditional on the magnitudes $\lvert z_i\rvert$)
of Wilcoxon's statistic $V_n$
defined as the sum of the ranks of the positive observations.

We will be interested in the nonparametric \e-test \eqref{eq:E-exp} with $S:=V_n$, i.e.,
\begin{equation}\label{eq:E-Wilcoxon}
  E_{\lambda}(z_1,\dots,z_n)
  :=
  \exp
  \left(
    \lambda V_n - C
  \right).
\end{equation}
The following lemma gives a convenient formula for computing $C$.

\begin{lemma}
  The value of $C$ in \eqref{eq:E-Wilcoxon} is given by
  \begin{equation}\label{eq:C}
    C
    =
    \sum_{i=1}^n
    \log
    \frac{1 + e^{\lambda i}}{2}.
  \end{equation}
\end{lemma}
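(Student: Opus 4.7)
The plan is to mimic the proof of Lemma 3.2 (the Fisher-type case), exploiting the fact that, conditional on the magnitudes $\lvert z_1\rvert,\dots,\lvert z_n\rvert$, the signs of the observations are i.i.d.\ uniform on $\{-1,+1\}$ under the symmetry model. The whole argument reduces to computing $\mathbb{E}\exp(\lambda V_n)$ and showing that it factors into a product over ranks.

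The key observation is this. Let $R_i$ be the rank of $\lvert z_i\rvert$ (fixed once magnitudes are fixed) and let $J_i := \mathbf{1}\{z_i>0\}$; then under the Markov kernel $b(t(z_1,\dots,z_n))$ the $J_i$ are i.i.d.\ Bernoulli$(1/2)$, and
\[
  V_n = \sum_{i=1}^n R_i J_i.
\]
Since $(R_1,\dots,R_n)$ is a permutation of $(1,\dots,n)$ and the $J_i$ are i.i.d., the sum $V_n$ has the same distribution as $\sum_{i=1}^n i B_i$, where $B_1,\dots,B_n$ are independent Bernoulli$(1/2)$ random variables. (Concretely, set $B_i := J_{\sigma(i)}$ where $\sigma$ is the permutation sending rank $i$ to the index with that rank.)

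From here the computation is routine. Using independence of the $B_i$,
\[
  e^C = \mathbb{E}\exp(\lambda V_n) = \prod_{i=1}^n \mathbb{E}\exp(\lambda i B_i) = \prod_{i=1}^n \frac{1 + e^{\lambda i}}{2},
\]
and taking logarithms yields \eqref{eq:C}. Alternatively, one can avoid the reindexing step by computing directly: conditional on the magnitudes,
\[
  e^C = 2^{-n}\sum_{(j_1,\dots,j_n)\in\{0,1\}^n} \exp\!\left(\lambda\sum_{i=1}^n j_i R_i\right) = 2^{-n}\prod_{i=1}^n\left(1 + e^{\lambda R_i}\right),
\]
and then use that $\{R_1,\dots,R_n\} = \{1,\dots,n\}$ to replace $R_i$ by $i$ in the product.

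There is no real obstacle; the only point that requires a moment of care is the distributional identity between $\sum_i R_i J_i$ and $\sum_i i B_i$, which should be noted explicitly (it follows from i.i.d.\ symmetry of the $J_i$, not from any cancellation specific to the ranks). Everything else is a one-line moment-generating-function calculation analogous to the proof of Lemma~\ref{lem:C-Fisher}.
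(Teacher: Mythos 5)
Your proof is correct, and it takes a somewhat different route from the paper's. The paper parametrizes the conditional distribution by the set $A\subseteq\{1,\dots,n\}$ of ranks of the positive observations (each such set having probability $2^{-n}$ under the kernel $b$), writes $e^C=2^{-n}\sum_{A\subseteq\{1,\dots,n\}}\Lambda^{\Sum(A)}$ with $\Lambda:=e^{\lambda}$, and factorizes this subset sum by the recursion $\Sigma_i=\Sigma_{i-1}+\Lambda^i\Sigma_{i-1}$, i.e., by induction on $n$. You instead keep per-observation sign indicators, write $V_n=\sum_{i=1}^n R_i J_i$, and factorize the moment generating function via independence of the $J_i$, then reindex using $\{R_1,\dots,R_n\}=\{1,\dots,n\}$ --- a step the paper's subset parametrization avoids entirely, since summing over sets of ranks already forgets which observation carries which rank. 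Both arguments come down to the same identity $\sum_{A\subseteq\{1,\dots,n\}}\Lambda^{\Sum(A)}=\prod_{i=1}^n(1+\Lambda^i)$: you obtain it by distributing the product (probabilistically, as the MGF of a sum of independent Bernoulli terms), the paper by induction. Your version has the advantage of running exactly parallel to the proof of Lemma~\ref{lem:C-Fisher}, as you intended, and of making transparent why $C$ depends only on $n$ and $\lambda$ and not on the magnitudes. The one point you flag --- that $\sum_i R_i J_i$ has the same distribution as $\sum_i iB_i$ because the rank permutation is fixed given the magnitudes and the $J_i$ are i.i.d.\ --- is handled correctly; you could add in passing that the ranks are well defined because the magnitudes are almost surely distinct under the paper's continuity assumption, but this is a standing assumption of Sect.~\ref{sec:Fisher}, not a gap.
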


\begin{proof}
  Using Fisher's conditional distribution
  (the uniform probability measure on \eqref{eq:b}),
  we can write $C$ in the form
  \[
    C
    =
    \log
    \left(
      2^{-n}
      \sum_{A\subseteq\{1,\dots,n\}}
      \exp(\lambda\Sum(A))
    \right),
  \]
  where $\Sum(A)$ is the sum of all elements of $A$.
  Setting
  \[
    \Sigma_i
    :=
    \sum_{A\subseteq\{1,\dots,i\}}
    \Lambda^{\Sum(A)},
  \]
  where $\Lambda:=\exp(\lambda)$,
  and using the recursion
  \[
    \Sigma_i = \Sigma_{i-1} + \Lambda^i \Sigma_{i-1}
  \]
  (obtained by splitting all subsets of $\{1,\dots,i\}$
  into those that do not contain $i$ and those that do),
  we obtain
  \[
    \Sigma_n
    =
    \prod_{i=1}^n
    (1+\Lambda^i).
    \qedhere
  \]
\end{proof}

Plugging \eqref{eq:C} into \eqref{eq:E-Wilcoxon},
we obtain \emph{Wilcoxon's signed-rank \e-test}
\begin{equation}\label{eq:E-Wilcoxon-full}
  E_{\lambda}(z_1,\dots,z_n)
  :=
  \exp(\lambda V_n)
  \prod_{i=1}^n\frac{2}{1+e^{\lambda i}}.
\end{equation}

\subsection*{Efficiency of Wilcoxon's signed-rank \e-test}

Our derivation in this subsection will follow \cite[Example 3.3.6]{Lehmann:1999}.
The statistic
\begin{equation}\label{eq:T_n}
  T_n
  :=
  V_n / \binom{n}{2},
\end{equation}
$V_n$ being Wilcoxon's signed-rank statistic
defined at the beginning of this section,
is asymptotically normal both under the null hypothesis $N(0,1)$,
\begin{equation}\label{eq:T-null}
  T_n
  \sim
  N
  \left(
    \frac12,
    \frac{1}{3n}
  \right),
\end{equation}
and under the alternative hypothesis $N(\theta_\nu,1)$,
\begin{equation}\label{eq:T-alternative}
  T_n
  \sim
  N
  \left(
    \frac12+\frac{\theta_\nu}{\sqrt{\pi}},
    \frac{1}{3n}
  \right).
\end{equation}
The mean value $1/2+\theta_\nu/\sqrt{\pi}$ in \eqref{eq:T-alternative}
is found as the first-order approximation to the probability of $Z_1 + Z_2 > 0$,
where $Z_1$ and $Z_2$ are independent and distributed
according to the alternative hypothesis $N(\theta_\nu,1)$
(see \cite[(3.3.40)]{Lehmann:1999}).
Namely, it is obtained from $Z_1+Z_2\sim N(2\theta_\nu,2)$
and from the standard Gaussian density being $1/\sqrt{2\pi}$ at 0.

From \eqref{eq:T-null} and \eqref{eq:T-alternative}
we obtain the asymptotic likelihood ratio
\begin{equation}\label{eq:LR-Wilcoxon}
  \frac
  {
    \exp
    \left(
      -\frac12
      \left(T_n-\frac12-\frac{\theta_\nu}{\sqrt{\pi}}\right)^2
      /
      \frac{1}{3n}
    \right)
  }
  {
    \exp
    \left(
      -\frac12
      \left(T_n-\frac12\right)^2
      /
      \frac{1}{3n}
    \right)
  }
  =
  \exp
  \left(
    3 n
    \left(T_n-\frac12\right)
    \frac{\theta_\nu}{\sqrt{\pi}}
    -
    \frac{3n}{2}
    \frac{\theta_\nu^2}{\pi}
  \right)
\end{equation}
(of the form \eqref{eq:E-Wilcoxon}; see below).
The observed \e-power is obtained by removing the $\exp$,
and then the \e-power is obtained by taking the expectation
w.r.\ to $T_n$ distributed as \eqref{eq:T-alternative}.
Therefore, the \e-power is, asymptotically,
\[
  3 n
  \frac{\theta_\nu}{\sqrt{\pi}}
  \frac{\theta_\nu}{\sqrt{\pi}}
  -
  \frac{3n}{2}
  \frac{\theta_\nu^2}{\pi}
  =
  \frac{3n}{2}
  \frac{\theta_\nu^2}{\pi}.
\]
The number of observations required for achieving an \e-power of $\beta$
is, asymptotically,
\[
  \frac{2\pi}{3}
  \beta
  \theta_\nu^{-2}.
\]
Comparing this with the baseline \eqref{eq:baseline}
gives the asymptotic relative efficiency of $3/\pi\approx0.955$,
as in the classical case.
Wilcoxon's test wastes one observation out of about 22
(under the Gaussian model as compared with the \e-test optimized for that model).

The approximate \e-test used in this calculation
(given by the right-hand side of \eqref{eq:LR-Wilcoxon})
is of the form \eqref{eq:E-Wilcoxon} with
\[
  \lambda
  :=
  \frac{3n\theta_\nu}{\binom{n}{2}\sqrt{\pi}}
\]
(obtained by expressing \eqref{eq:LR-Wilcoxon} in terms of $V_n$ using \eqref{eq:T_n}).
This, however, ignores the definition of $C$ in \eqref{eq:E-Wilcoxon}.
In practical application we should use, of course,
the precise expression \eqref{eq:E-Wilcoxon-full}.

\section{Directions of further research}
\label{sec:conclusion}

In the previous sections we mentioned several limitations of our definitions.
In this concluding section we will add further details.

\subsection*{The notion of \e-power as used in the definition of efficiency}

Our notion of \e-power for an \e-variable $E$ is crude in that it depends only
on the expectation of $\log E$, as explained in Remark~\ref{rem:full}.
This crudeness is inherited by our definition of the asymptotic relative efficiency of \e-tests.
According to our definition in Sect.~\ref{sec:power},
the asymptotic relative efficiency is $c$ if $n_{\nu,2} \sim c n_{\nu,1}$.
This statement will be particularly useful
if, under the alternative hypothesis,
the full distribution of the original likelihood ratio,
such as \eqref{eq:LR} for $\theta=\theta_{\nu}$ and $n_{\nu,2}$ observations,
is close, in a suitable sense, to the full distribution of the \e-test,
such as \eqref{eq:E-Fisher}, \eqref{eq:E-sign-lambda}, or \eqref{eq:E-Wilcoxon-full}
(with $n_{\nu,1}$ observations and the corresponding value of the parameter).
Therefore, a fuller treatment of asymptotic relative efficiency
will not use \e-power directly (which will make it more complicated).

\subsection*{Definition of efficiency in terms of mixtures}

Our definition of Pitman-type efficiency is close to being a direct translation
of the classical one.
It considers the alternatives $N(0,\theta_\nu)$ that approach the null hypothesis $N(0,1)$
as the difficulty $\nu$ increases.
In the classical case,
this works perfectly for many popular assay models
because of the existence of a uniformly most powerful test:
the optimal size $\alpha$ critical region does not depend on $\nu$
(assuming $\theta_\nu>0$).
In the \e-case, on the contrary,
the optimal \e-variable does depend on $\nu$.

A possible alternative definition would be to replace $N(\theta_{\nu},1)$
by a mixture $\int N(\theta,1) \mu_\nu(\dd\theta)$ of $N(\theta,1)$
w.r.\ to a probability measure $\mu_\nu(\dd\theta)$
that is increasingly concentrated around $\theta=0$ as $\nu\to\infty$.
In a sense, the assay statistical model considered in this paper is ``pure''
in that it consists of pure Gaussian distributions.
Considering mixtures $\int N(\theta,1) \mu_\nu(\dd\theta)$ would make the results more realistic
but would also make the definitions more complicated.

\subsection*{Other assay models}

In our efficiency results,
the Gaussian model can be replaced by other statistical models.
It is particularly interesting to compare nonparametric \e-tests
with the optimal \e-tests under those models;
nowadays, comparison with the t-test,
which was done in many of the classical papers
(e.g., \cite{Hodges/Lehmann:1956}),
looks less convincing for non-Gaussian assay models.

Our choice of the form \eqref{eq:E-exp} of the nonparametric \e-tests
considered in this paper was motivated by the Gaussian assay model:
see the likelihood ratio \eqref{eq:LR}.
Using other assay models would lead to other nonparametric \e-tests.
Therefore, varying the assay model
may be a useful design tool for nonparametric \e-tests.

\subsection*{Other notions of efficiency}

The Pitman-type notion of efficiency is ``local'',
in the sense of being defined in terms of progressively more difficult alternatives
that tend to the null hypothesis as $\nu\to\infty$.
It is the most popular notion of efficiency for nonparametric tests,
but it would be interesting to develop \e-versions
of other, non-local, notions of asymptotic relative efficiency
(see, e.g., \cite[Chap.~1]{Nikitin:1995}).

\section*{Acknowledgements}

Many thanks to the anonymous reviewers of the journal version of this paper.
Vladimir Vovk's research has been supported by Mitie.
Ruodu Wang acknowledges financial support
by grants CRC-2022-00141 and RGPIN-2018-03823
from the Natural Sciences and Engineering Research Council of Canada.

\end{document}